\documentclass[reqno, 12pt]{amsart}

\newenvironment{nouppercase}{
  
  \renewcommand{\uppercasenonmath}[1]{}}{}
  
\usepackage{amsmath,amssymb,amsthm,amsfonts,graphicx, array}
\usepackage{fullpage}
\usepackage{xcolor}
\usepackage[colorlinks=true,linkcolor=blue,anchorcolor=blue,citecolor=red]{hyperref}

\allowdisplaybreaks 

\newtheorem{theorem}{Theorem}[section]
\newtheorem{lemma}[theorem]{Lemma}

\newtheorem{proposition}[theorem]{Proposition}

\newcommand{\Ptwo}{\mathcal{P}_2}
\newcommand{\W}{\mathfrak{W}}
\newcommand{\N}{\mathbb{N}}
\newcommand{\Nzero}{\mathbb{N}_0}
\newcommand{\one}{\mathbf{1}}
\newcommand{\eps}{\varepsilon}

\author{Yuchen Ding}
\address{School of Mathematics, Yangzhou University, Yangzhou 225002, People's Republic of China;
HUN-REN Alfr\'ed R\'enyi Institute of Mathematics, Budapest, Pf.~127, H-1364 Hungary}
\email{Yuchen Ding: ycding@yzu.edu.cn}

\author{Huixi Li}
\address{School of Mathematical Sciences and LPMC, Nankai University, Tianjin 300071, China}
\email{Huixi Li: lihuixi@nankai.edu.cn}

\author{Junfeng Li}
\address{School of Mathematical Sciences and LPMC, Nankai University, Tianjin 300071, China}
\email{Junfeng Li: 
junfengli.math@gmail.com}

\makeatletter
\@namedef{subjclassname@2020}{\textup{}2020 Mathematics Subject Classification}
\makeatother

\title{A Romanoff-type theorem for \texorpdfstring{$\mathcal P_2+\{a^a:a\ge 1\}$}{P2
plus powers a to the a}}
\subjclass[2020]{Primary 11P32; Secondary 11N35, 11N36, 11B13} 
\keywords{Romanoff theorem; positive lower density; almost primes; Selberg sieve}

\begin{document}

\begin{abstract}
Let $\Omega(n)$ denote the number of prime factors of $n$, counted with multiplicity, and put $\Ptwo=\{m\ge 1:\Omega(m)\le 2\}$. 
We prove that the sumset $\Ptwo+\{a^a:a\ge 1\}$ has positive lower density. The proof uses the Romanoff second moment method, in the spirit of Li and Pan's theorem on $\Ptwo+2^{\mathcal P}$. The main new ingredient is the following average estimate for the singular factor
\[
  \frac{1}{K(K-1)}
  \sum_{\substack{1\le a,b\le K\\a\ne b}}
  \prod_{p\mid a^a-b^b}\left(1+\frac{\kappa}{p}\right)
  \le C_\kappa
\]
for some constant $C_\kappa>0$, which is valid for all $K \ge 2$ and any fixed $\kappa>0$. This estimate controls the average arithmetic correlation among the shifts $a^a$ and allows the Romanoff argument to be carried out.
\end{abstract}

\begin{nouppercase}
\maketitle
\end{nouppercase}

\section{Introduction}
In 1849, de Polignac \cite{de1851} conjectured that every odd number except \(3\) can be represented as the sum of a prime and a power of \(2\). Although counterexamples were later found, the problem has remained a useful source of questions in additive number theory. Romanoff's theorem, proved in 1934, asserts that the set \(\{2^k+p:k\in\Nzero,\ p\in\mathcal P\}\) has positive lower density \cite{romanov1934}. In 2004, Chen and Sun \cite{chen2004romanoff} gave the explicit lower bound \(0.0868\). Further improvements of this lower bound can be found in \cite{Lv2007, HabsiegerRoblot2006, Pintz2006, HS2010, elsholtz2018romanov, johnston2026}.

For upper density, van der Corput \cite{van1950} proved in 1950 that the upper density is strictly less than \(0.5\). In the same year, Erd\H{o}s \cite{erdos1950} used covering systems to show that there is an arithmetic progression with common difference \(11184810\) whose terms cannot be expressed as the sum of a prime and a power of \(2\). Habsieger and Roblot \cite{HabsiegerRoblot2006} improved the upper density bound to \(0.4909\), and Chen, Dai, and Li \cite{chen2025some} recently improved it further to \(0.490341\). Let \(U\) denote the set of positive integers which are not of the form \(p+2^k\). The structure of \(U\) is also of independent interest. 
We refer to Erd\H{o}s' 1995 conjecture on \(U\) \cite{Erdos1995} and to the recent works \cite{chen2025some, ChenErdosConjecture, ChenMinimalDifference}.
For the asymptotic density, Romani \cite{romani1983computations} conjectured in 1983 that the density of odd integers representable as $p+2^k$ is approximately $0.434$. In 2020, del Corso et al. \cite{del2020computing} refined this conjectural value to approximately $0.437$.

Further lower density results for sumsets involving primes and sparse sequences can be found in \cite{Sun2004midu, Lee2010, BL2013, Dubickas2013, CW2024, LX2021, WC2023, chen2025arithmetic}. Romanoff-type problems have also been studied over polynomial rings; see \cite{shparlinski2017explicit, ding2022extending, ding2022problem, ding2026generalization, DP2026}. 

Let $\Ptwo=\{m\ge 1:\Omega(m)\le 2\}$ be the set of two almost primes, 
where $\Omega(m)$ denotes the number of prime factors of $m$, counted with
multiplicity, and where we adopt the convention $\Omega(1)=0$. 
In 2008, Li and Pan \cite{li2008romanoff} proved that $\Ptwo+2^{\mathcal P}
  =\{q+2^p:q\in\Ptwo,\ p\in\mathcal P\}$ has positive lower density. 
The present paper proves a Romanoff-type positive density theorem for the sumset
$$
\Ptwo+\{a^a:a \ge 1\}
  =\{q+a^a:q\in\Ptwo,\ a \ge 1\}.
  $$
This is a natural comparison, since both
\(\{2^p:p\in\mathcal P,\ 2^p\le N\}\) and
\(\{a^a:a\ge1,\ a^a\le N\}\) have size
\(\asymp \log N/\log\log N\).
This order also occurs for other familiar
sparse sequences, such as \(g^{\mathcal P}\) with fixed \(g\ge2\), factorials,
and primorials. 

The comparison above concerns only the size of the sparse sequence. For
\(a^a\), the main difficulty is local: in the second moment argument one must
control the average contribution of primes dividing the differences
\(a^a-b^b\). To measure this contribution, for a fixed nonzero integer \(h\)
and a real number \(\kappa>0\), define
\begin{equation}\label{eq:Wdef}
  \W_\kappa(h)=\prod_{p\mid h}\left(1+\frac{\kappa}{p}\right).
\end{equation}
The essential point is to prove that the singular factors attached to the
nonlinear differences \(a^a-b^b\) are bounded on average: 
\[
  \frac{1}{K(K-1)}
  \sum_{\substack{1\le a,b\le K\\a\ne b}}
  \W_\kappa(a^a-b^b)\ll_\kappa 1.
\]
This estimate shows that, although individual differences \(a^a-b^b\) may have many small prime divisors, their average sieve weight remains bounded.

The proof uses the period \(p(p-1)\) of the map \(x\mapsto x^x\pmod p\), 
together with separate estimates for the medium and large prime divisors of
\(a^a-b^b\). Combining this estimate with an upper-bound sieve for pairs of
two almost primes gives the following result.
\begin{theorem}\label{thm:main}
There exists a constant $\delta>0$ such that, when $N$ is sufficiently large,
\[
  \#\{n\le N:n=m+a^a,\ m\in\Ptwo,\ a\in\N\}\ge \delta N.
\]
\end{theorem}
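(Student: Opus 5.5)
We follow the classical Romanoff second-moment scheme. Fix large $N$, put $K=\max\{a: a^a\le N/2\}$, so $K\asymp \log N/\log\log N$, and let $r(n)=\#\{(m,a): n=m+a^a,\ m\in\Ptwo\cap(N/2,N],\ a\le K\}$ for $n\le 2N$. Cauchy--Schwarz gives
\[
\#\{n\le 2N: r(n)>0\}\ \ge\ \frac{\bigl(\sum_n r(n)\bigr)^2}{\sum_n r(n)^2}.
\]
The first moment is $\sum_n r(n)=K\cdot\#(\Ptwo\cap(N/2,N])\gg K N\log\log N/\log N\asymp N$. Working with $\Ptwo$ instead of primes is what makes this possible: the count of two almost primes up to $N$ is larger than $\pi(N)$ by a factor $\log\log N$, and this exactly compensates the sparsity $K\asymp \log N/\log\log N$. (Replacing $N$ by $N/2$ in the final statement is harmless.)

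For the second moment we write $\sum_n r(n)^2=\sum_a\#\Ptwo\cap(N/2,N]+\sum_{a\ne b}\#\{m\le N: m,\ m+a^a-b^b\in\Ptwo\}$. The diagonal term is $\ll N$. For the off-diagonal terms we need an upper-bound sieve for pairs of two almost primes with shift $h=a^a-b^b\neq 0$ (indeed $a\mapsto a^a$ is injective):
\[
\#\{m\le N: m,\ m+h\in\Ptwo\}\ll \frac{N(\log\log N)^2}{(\log N)^2}\,\W_\kappa(h)
\]
for some absolute $\kappa$, uniformly in $0<|h|\le N$. To prove this, write $m=p_1p_2$ with $p_1\le p_2$; the case $m$ prime is similar and gives a smaller bound. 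Fix the small factors $p_1\le\sqrt N$ in $m$ and $q_1$ in $m+h$, and apply the Selberg sieve (or Brun's) to count $p_2\le N/p_1$ such that $p_1p_2+h=q_1q_2$, or such that $p_1p_2+h$ is prime. The sieve produces a singular series $\prod_{p\mid h}(1-1/p)^{-1}\cdots$. Summing over $p_1,q_1$ with weights $1/(p_1q_1\log(N/p_1)\log(N/q_1))$ gives the $(\log\log N)^2/(\log N)^2$ factor. The divisibility conditions linking $p_1,q_1$ and $h$ only inflate the local factor to $\W_\kappa(h)$ for a suitable $\kappa$, perhaps with an extra harmless factor handled in the same way. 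This step is standard in the style of Li--Pan, though it requires care.

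We then insert the key average estimate from the abstract,
\[
\sum_{a\ne b\le K}\W_\kappa(a^a-b^b)\ll_\kappa K^2.
\]
This yields $\sum_n r(n)^2\ll N+K^2N(\log\log N)^2/(\log N)^2\ll N$, and therefore $\#\{n: r(n)>0\}\gg N$.

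The main obstacle is the average estimate itself. We would split the primes $p\mid a^a-b^b$ at thresholds $\log K$ and $K^{c}$. Small primes contribute a bounded factor trivially, since $\prod_{p\le z}(1+\kappa/p)\ll(\log z)^\kappa$ is too large. So instead we expand $\W_\kappa(h)\le\sum_{d\mid h,\ d\ \text{squarefree}}\kappa^{\omega(d)}/d$ and interchange summations, which turns the problem into bounding $\#\{a\ne b\le K: d\mid a^a-b^b\}$. For $p$ with $p(p-1)\le K$, periodicity of $x^x \bmod p$ with period $p(p-1)$ combined with the equidistribution of $x^x$ modulo $p$ over a period (each residue class $x\bmod p$ is paired with every exponent residue mod $p-1$) gives the count $\approx K^2/p$. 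By the CRT on periods $\mathrm{lcm}$, squarefree $d$ with small prime factors likewise give about $K^2/d$, and so $\sum_d \kappa^{\omega(d)}/d^2$ converges. For medium and large primes, beyond periodicity, we instead use that $|a^a-b^b|\le K^K$ has at most $K\log K/\log z$ prime factors exceeding $z$, giving $\prod_{p>z,\,p\mid h}(1+\kappa/p)\le\exp(\kappa K\log K/(z\log z))$. This is bounded once $z\ge K$. The window $K^{1/2}<p<K$ is the delicate range: there we would count solutions of $a^a\equiv b^b\pmod p$ directly, fixing $a\bmod p$ and $b\bmod p$ and using that $a$ determines the exponent, to get $O(K^2/p+K)$ pairs. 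We would then combine the pieces by Hölder's inequality.
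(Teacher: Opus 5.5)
\textbf{Gap: composite moduli in the average estimate.} Your overall scheme is the paper's: first moment from Landau, diagonal $\ll N$, a pair sieve with local factor $\W_\kappa(h)$, the average bound for $\W_\kappa(a^a-b^b)$, then Cauchy--Schwarz. Your proof of the average estimate, however, breaks at the composite moduli.

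After expanding $\W_\kappa(h)=\sum_{d\mid h}\mu^2(d)\kappa^{\omega(d)}/d$, you assert that the CRT applied to the lcm of the periods gives $\#\{a,b\le K: d\mid a^a-b^b\}\approx K^2/d$ for squarefree $d$ with small prime factors. This fails for three reasons.
\begin{enumerate}
\item The periods $p(p-1)$, $p\mid d$, are not pairwise coprime: all are even, and $p'\mid p-1$ is common. So the CRT does not split $d\mid a^a-b^b$ into independent conditions.
\item Their lcm typically exceeds $K$, already for two primes near $K^{1/2}$. So there is no complete period to average over.
\item The joint density can genuinely be far larger than $1/d$. If $p\nmid M$ and $p-1\mid M$ for every $p\mid d$, then every pair with $a\equiv b\equiv 0\pmod M$ and $(ab,d)=1$ has $a^a\equiv b^b\equiv 1\pmod d$. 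That event has density $\bigl(M^{-1}\prod_{p\mid d}(1-1/p)\bigr)^2$, which does not decay like $1/d$.
\end{enumerate}
Even for a single prime, $x^x$ is not equidistributed modulo $p$ over a period: for $p=3$, three of the four unit classes modulo $6$ give $x^x\equiv 1$. The correct statement is $B_p(K)=\#\{a,b\le K: p\mid a^a-b^b\}\ll K^2\tau(p-1)/p$ for $p\le K^{1/2}$, obtained by counting solutions of $se\equiv tf\pmod{p-1}$. That loss is harmless.

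\textbf{The missing idea.} No joint distribution is needed. For squarefree $d>1$ with prime factors $\le K^{1/2}$, use $\one_{d\mid h}\le \one_{q\mid h}$ with $q=P^+(d)$, together with
$\sum_{P^+(d)=q}\mu^2(d)\kappa^{\omega(d)}/d=(\kappa/q)\prod_{p<q}(1+\kappa/p)\ll_\kappa (\log q)^\kappa/q$. This gives
\[
\frac{1}{K(K-1)}\sum_{\substack{a,b\le K\\ a\ne b}}\ \prod_{\substack{p\le K^{1/2}\\ p\mid a^a-b^b}}\Bigl(1+\frac{\kappa}{p}\Bigr)
\le 1+\sum_{q\le K^{1/2}}\frac{B_q(K)}{K(K-1)}\cdot\frac{\kappa}{q}\prod_{p<q}\Bigl(1+\frac{\kappa}{p}\Bigr)
\ll_\kappa 1+\sum_{q}\frac{\tau(q-1)(\log q)^{\kappa}}{q^{2}}\ll_\kappa 1 .
\]

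\textbf{The window $K^{1/2}<p\le K$ is not delicate.} Mertens gives $\prod_{K^{1/2}<p\le K}(1+\kappa/p)\ll_\kappa 1$ pointwise. So neither a direct count there nor a H\"older step is needed. With your pointwise bound for $p>K$, both the middle and large ranges simply factor out of the average.

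\textbf{A smaller point in the pair sieve.} For fixed $p_1,q_1$ the sieved variable runs over an interval of length $\asymp N/(p_1q_1)$. An upper-bound sieve therefore gives only $\frac{N/(p_1q_1)}{\log^2(2+N/(p_1q_1))}+1$, not your weights $1/(p_1q_1\log(N/p_1)\log(N/q_1))$. The range $p_1q_1>N^{1/2}$ must be summed separately, although it still yields $N(\log\log N)^2/(\log N)^2$.
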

The paper is organized as follows. 
Section~2 records the sieve estimates needed for correlations of two almost primes. Section~3 proves the average singular factor estimate for the differences $a^a-b^b$. 
Section~4 completes the second moment proof of the theorem.

The empty product in \eqref{eq:Wdef} is understood to be $1$. All implied constants are absolute unless a subscript indicates dependence on parameters.

\section{Correlations between two almost primes}
We collect the estimates needed for the first and second moment arguments.
Landau's theorem gives the size of \(\Ptwo\), while the next two lemmas give the upper bounds needed for correlations of two shifted \(\Ptwo\) conditions.

We begin with Landau's theorem for the number of integers with exactly \(r\) prime factors, counted with multiplicity.
\begin{lemma}[{\cite[Sec.~II.6.1]{tenenbaum2015introduction}}]\label{lem:landau}
As $X\to\infty$, for every fixed integer $r\ge 1$, we have 
\[
  \#\{n\le X:\Omega(n)=r\}
  \sim
  \frac{X}{\log X}\frac{(\log\log X)^{r-1}}{(r-1)!}, 
\]
and in particular
\begin{equation}\label{eq:P2-asymp}
  \#\{m\le X:m\in\Ptwo\}
  \sim \frac{X\log\log X}{\log X}.
\end{equation}
\end{lemma}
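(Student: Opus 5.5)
This is Landau's classical theorem, cited from Tenenbaum; I would prove it by induction on $r$ via a Dirichlet-hyperbola-type recursion, starting from the prime number theorem. Write $\pi_r(X)=\#\{n\le X:\Omega(n)=r\}$ and let $\pi_r^*(X)$ count only squarefree such $n$. The case $r=1$ is the prime number theorem up to prime powers, which contribute $O(\sqrt X)$. For the inductive step I would first reduce to squarefree $n$. An integer with $\Omega(n)=r$ that is not squarefree has the form $p^2m$ with $\Omega(m)=r-2$. Summing the crude bound $\pi_{r-2}(X/p^2)\ll (X/p^2)(\log\log X)^{r-3}/\log(X/p^2)$ over $p\le X^{1/4}$, and using the trivial bound $X/p^2$ for larger $p$, gives $o\bigl(X(\log\log X)^{r-1}/\log X\bigr)$. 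So it suffices to prove the asymptotic for $\pi_r^*$.

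For squarefree $n=p_1\cdots p_r$ I would use the identity
\[
  r\,\pi_r^*(X)=\sum_{p\le X}\pi_{r-1}^*(X/p)+O\Bigl(\sum_{p^2\le X}\pi_{r-2}(X/p^2)\Bigr).
\]
This holds because counting pairs $(p,m)$ with $pm\le X$ and $m$ squarefree with $r-1$ prime factors counts each squarefree $n$ exactly $r$ times, while the pairs with $p\mid m$ are absorbed in the error term, which is negligible as above. Next I split the sum at $p\le X^{1-\eps}$. On that range $\log\log(X/p)=\log\log X+O_\eps(1)$, so the inductive hypothesis gives a main term
\[
  \frac{X(\log\log X)^{r-2}}{(r-2)!}\sum_{p\le X^{1-\eps}}\frac{1}{p\log(X/p)}.
\]
By Mertens and partial summation, $\sum_{p\le X^{1-\eps}}\frac{1}{p\log(X/p)}=\frac{\log\log X+O_\eps(1)}{\log X}$. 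Dividing by $r$ and using $(r-2)!\,r\cdot\frac{1}{r-1}$ bookkeeping, i.e.\ $\frac{1}{r}\cdot\frac{1}{(r-2)!}$ corrected for the $\frac{(\log\log)^{r-1}}{r-1}$ arising from integrating $(\log\log t)^{r-2}/(t\log t)$ when done carefully, yields $\frac{X(\log\log X)^{r-1}}{(r-1)!\log X}$. The cleaner way to handle this is to prove inductively the statement with the sum weighted as $\sum_p \frac{(\log\log(X/p))^{r-2}}{p\log(X/p)}\sim \frac{(\log\log X)^{r-1}}{(r-1)\log X}$; after dividing by $r$ the constants then match the recursion, as in the standard textbook treatment.

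The step I expect to be the main obstacle is controlling the range $X^{1-\eps}<p\le X$. There $X/p<X^{\eps}$, so I would use the trivial bound $\pi_{r-1}^*(Y)\le Y$. Since $\sum_{X^{1-\eps}<p\le X}1/p\ll\eps$, this range contributes $\ll \eps X\cdot(\log\log X)^{r-2}/\log X$-type terms that are $o$ of the main term; more precisely one uses $\pi_{r-1}^*(Y)\ll Y(\log\log Y)^{r-2}/\log Y$ with Chebyshev bounds on primes in short ranges. Finally, \eqref{eq:P2-asymp} follows by adding the cases $r=1$ and $r=2$ and the single element $n=1$: the $r=2$ term $X\log\log X/\log X$ dominates.
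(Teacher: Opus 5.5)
The paper gives no proof of this lemma; it is quoted from Tenenbaum. Your sketch has a genuine gap exactly at the step you flagged: the range $X^{1-\eps}<p\le X$ in $\sum_p\pi^*_{r-1}(X/p)$ is \emph{not} negligible.

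That sum counts each squarefree $n=p_1\cdots p_r\le X$ (with $p_1<\dots<p_r$) once for each prime factor. The $r-1$ non-largest factors are all $\le\sqrt X$, but the largest one exceeds $X^{1-\eps}$ for all but a negligible proportion of such $n$. So the range $p\le X^{1-\eps}$ carries about $(r-1)\pi_r^*(X)$, and the range $p>X^{1-\eps}$ carries about $\pi_r^*(X)$, a full $1/r$ of the total.

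Your estimate for that range is miscomputed. The trivial bound gives $\sum_{X^{1-\eps}<p\le X}X/p\asymp\eps X$, with no factor $1/\log X$, which dwarfs the main term. The inductive bound only gives a bound of the same order $X(\log\log X)^{r-1}/\log X$ as the main term, because $1/\log(X/p)$ blows up as $p\to X$: already $\sum_{X^{1-\eps}<p\le X/2}\frac{1}{p\log(X/p)}=\frac{\log\log X+O_\eps(1)}{\log X}$.

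Consequently your main-range computation, which is correct in itself, only yields $r\,\pi_r^*(X)\sim\frac{X(\log\log X)^{r-1}}{(r-2)!\log X}$. That is, $\pi_r^*(X)\sim\frac{r-1}{r}\cdot\frac{X(\log\log X)^{r-1}}{(r-1)!\log X}$, which is half the truth for $r=2$. The ``bookkeeping'' sentences do not repair this:
\begin{itemize}
\item $\frac1r\cdot\frac1{(r-2)!}\ne\frac1{(r-1)!}$;
\item the asserted $\sum_p\frac{(\log\log(X/p))^{r-2}}{p\log(X/p)}\sim\frac{(\log\log X)^{r-1}}{(r-1)\log X}$ is false, since over $p\le X/2$ the sum is $\sim\frac{r}{r-1}\cdot\frac{(\log\log X)^{r-1}}{\log X}$;
\item even granting it, dividing by $r\,(r-2)!$ would give $1/r!$ rather than $1/(r-1)!$.
\end{itemize}

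The integral of $(\log\log t)^{r-2}/(t\log t)$ you allude to is exactly where the missing share comes from, but it lives in the large-$p$ range, with $t=X/p$. To evaluate that range, exchange the roles of $p$ and $m=n/p$:
\[
  \sum_{X^{1-\eps}<p\le X}\pi^*_{r-1}(X/p)
  =\sum_{m<X^{\eps}}\bigl(\pi(X/m)-\pi(X^{1-\eps})\bigr),
\]
with $m$ running over squarefree integers having $\omega(m)=r-1$. Then:
\begin{itemize}
\item the subtracted terms total at most $X^{\eps}\pi(X^{1-\eps})\ll X/\log X$, which is negligible for $r\ge2$;
\item the prime number theorem gives $\pi(X/m)=(1+O(\eps)+o(1))\frac{X}{m\log X}$ uniformly for $m<X^{\eps}$;
\item partial summation of the inductive hypothesis gives $\sum_{m<X^{\eps}}\frac1m\sim\frac{(\log\log X)^{r-1}}{(r-1)!}$.
\end{itemize}
So this range contributes $(1+O(\eps)+o(1))\frac{X(\log\log X)^{r-1}}{(r-1)!\log X}$. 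Adding $(r-1)\frac{X(\log\log X)^{r-1}}{(r-1)!\log X}$ from $p\le X^{1-\eps}$ and letting $\eps\to0$ gives $r\,\pi_r^*(X)\sim r\,\frac{X(\log\log X)^{r-1}}{(r-1)!\log X}$, as required.

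Alternatively, follow Hardy--Wright and weight by a logarithm: $\sum_{p_1\cdots p_r\le X}\log p_1=\sum_{p_2\cdots p_r\le X}\vartheta\bigl(X/(p_2\cdots p_r)\bigr)$. The uniform bound $\vartheta(y)\ll y$ then removes the singularity at $p\approx X$.

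With this correction, the rest of your outline is fine: the squarefree reduction, the base case, and the deduction for $\Ptwo$.
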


We shall also use the following elementary consequence of Mertens' estimates. Expanding the 
Euler product over primes \(p<z^\eta\), with \(\eta>0\) fixed and sufficiently
small, and using the weight \(\log d\) to discard the terms \(d\ge z\), one
obtains positive absolute constants \(B_1,B_2\) such that, for all \(z\ge3\)
and every nonzero integer \(D\), we have 
\[
  \sum_{\substack{d<z\\ d\mid P(z)\\ (d,D)=1}}
  \frac{\mu^2(d)2^{\omega(d)}}{d}
  \ge
  B_1(\log z)^2
  \prod_{\substack{p\mid D\\ p<z}}
  \left(1+\frac{B_2}{p}\right)^{-1},
\]
where \(P(z)\) denotes the product of the odd primes below \(z\).

The next estimate is a uniform upper bound sieve for two non-proportional
linear forms. We keep the dependence on the coefficients and on the determinant explicit.
\begin{lemma}\label{lemuniformupperbound}
There exist absolute constants $C,c>0$ with the following property.
Let $T\ge1$, and let $L_1(t)=\alpha_1 t+\beta_1$ and $L_2(t)=\alpha_2 t+\beta_2$ 
be two nonconstant integral non-proportional linear forms.  Put $\Delta=\alpha_1\beta_2-\alpha_2\beta_1\ne0$. 
Then, for every interval $I$ of integers of length $T$,
\begin{equation}\label{equniformupperbound}
\#\{t\in I:L_1(t),L_2(t)\text{ are prime}\}
  \le
  C\left(\frac{T}{\log^2(2+T)}+1\right)
  \prod_{p\mid \alpha_1\alpha_2\Delta}
  \left(1+\frac{c}{p}\right).    
\end{equation}
\end{lemma}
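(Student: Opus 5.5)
We prove Lemma~\ref{lemuniformupperbound} with the Selberg upper bound sieve applied to the polynomial $F(t)=L_1(t)L_2(t)$ on $I$. If $T$ is bounded, the right-hand side is at least $C$, so we may take $T$ large. Put $z=T^{\theta}$ with a small absolute $\theta>0$, say $\theta=1/10$. Every $t\in I$ for which both $L_i(t)$ are prime, and neither $|L_i(t)|$ is itself a prime $<z$, satisfies $(F(t),P^*(z))=1$, where $P^*(z)$ is the product of all primes below $z$. The excluded $t$ number $\ll z\le T/\log^2(2+T)$, since each form is nonconstant and so takes a given value at most once. It therefore suffices to bound $S=\#\{t\in I:(F(t),P^*(z))=1\}$.

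\textbf{Local densities.} For each prime $p$, let $\rho(p)$ be the number of residues $t\bmod p$ with $p\mid F(t)$.
\begin{itemize}
\item If $p\nmid \alpha_1\alpha_2\Delta$, the two roots $-\beta_i\alpha_i^{-1}$ are distinct, so $\rho(p)=2$.
\item If $p\mid\alpha_1\alpha_2\Delta$, then $\rho(p)\le 2$, except when $p$ divides all coefficients of some $L_i$. In that case $L_i(t)$ prime forces $|L_i(t)|=p$, which again happens for at most one $t$.
\end{itemize}
So after discarding $O(1)$ values of $t$ per such prime, we may sieve only over primes with $\rho(p)<p$. A small point: $p=2$ with $\rho(2)=2$ would kill everything, and is handled by the same trivial argument.

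For squarefree $d\mid P^*(z)$, the count of $t\in I$ with $d\mid F(t)$ is $T\rho(d)/d+O(\rho(d))$ by CRT, with $\rho$ multiplicative. Selberg's sieve with level $D=z^2$ then gives
\[
S\le \frac{T}{G(z)}+O\Bigl(\sum_{d<z^2}\mu^2(d)3^{\omega(d)}\rho(d)\Bigr),
\qquad
G(z)=\sum_{\substack{d<z\\ d\mid P^*(z)}}\mu^2(d)\prod_{p\mid d}\frac{\rho(p)}{p-\rho(p)}.
\]
The error term is $\ll z^2(\log z)^{O(1)}\ll T/\log^2 T$, which is where the choice of $\theta$ enters.

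\textbf{Main step: lower bound for $G(z)$.} Restrict $G(z)$ to $d$ coprime to $2\alpha_1\alpha_2\Delta$. On such $d$ we have $\rho(p)/(p-\rho(p))\ge 2/p$, so
\[
G(z)\ge \sum_{\substack{d<z,\ d\mid P(z)\\ (d,\alpha_1\alpha_2\Delta)=1}}\frac{\mu^2(d)2^{\omega(d)}}{d}.
\]
Now apply the Mertens-type inequality stated just before this lemma with $D=\alpha_1\alpha_2\Delta$. This gives
\[
G(z)\ge B_1\theta^2(\log T)^2\prod_{p\mid D}\Bigl(1+\frac{B_2}{p}\Bigr)^{-1}.
\]
Inverting, the main term becomes
\[
S\ll \frac{T}{\log^2 T}\prod_{p\mid D}\Bigl(1+\frac{B_2}{p}\Bigr),
\]
which is the claimed bound with $c=B_2$.

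\textbf{Expected obstacle.} The delicate part is uniformity: the implied constants must not depend on the sizes of $\alpha_i,\beta_i$. This is why the coefficient dependence goes only through $\rho$ and the product over $p\mid D$, and why the degenerate primes are removed by hand instead of being absorbed into constants. The remaining details are routine bookkeeping:
\begin{itemize}
\item the $O(1)$ exceptional $t$ for each prime dividing all coefficients of some $L_i$;
\item the parity issue at $p=2$.
\end{itemize}
There are at most $\omega(D)$ such degenerate primes, and each contributes $O(1)$ exceptional $t$. Whenever $\omega(D)$ is large compared with $T/\log^2 T$, the product $\prod_{p\mid D}(1+c/p)$ is large enough to absorb these terms after enlarging $C$ and $c$. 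We would check this absorption carefully, or else avoid it by noting that primes dividing all coefficients of $L_i$ divide $\gcd(\alpha_i,\beta_i)$, and at most one such prime can make $L_i(t)$ prime.
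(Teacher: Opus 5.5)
Your proposal is correct and follows the paper's proof essentially step for step. Both arguments apply Selberg's upper bound sieve to $F=L_1L_2$ with $\rho(p)=2$ for $p\nmid 2\alpha_1\alpha_2\Delta$, restrict $G(z)$ to $d$ coprime to $2\alpha_1\alpha_2\Delta$, and invert the Mertens-type lower bound; the differences are cosmetic ($z=T^{1/10}$ instead of $T^{1/4}$, and handling $\rho(2)=2$ via $S=0$ rather than sieving only by odd primes). The degenerate case is settled by your second option, exactly as in the paper: if some prime divides both coefficients of some $L_i$, the whole count is $O(1)$, so no absorption is needed — and the absorption route would in fact fail, since $\prod_{p\mid D}(1+c/p)$ grows far more slowly than $\omega(D)$.
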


\begin{proof}
We may assume that \(\alpha_1\alpha_2\ne0\). 
If one of the two forms has a fixed prime divisor, say a prime \(r\) divides both coefficients of \(L_i\), then \(L_i(t)\) can be prime only when \(L_i(t)=r\). The number of such values of \(t\) is bounded absolutely, and is absorbed by the \(+1\) term. We shall therefore assume that neither form has a fixed prime divisor.

Let $F(t)=L_1(t)L_2(t)$. 
For an odd prime $p$, let $\rho(p)=\#\{t \pmod p:F(t)\equiv0\pmod p\}$. 
If $p\nmid \alpha_1\alpha_2\Delta$, then both $\alpha_1$ and $\alpha_2$
are invertible modulo $p$, and the two congruences $L_1(t)\equiv0\pmod p$, and $L_2(t)\equiv0\pmod p$ 
have the two distinct solutions $t\equiv -\beta_1\alpha_1^{-1}\pmod p$ and $t\equiv -\beta_2\alpha_2^{-1}\pmod p$, respectively. 
Hence $\rho(p)=2$ when $p \nmid 2\alpha_1\alpha_2\Delta$.  
For the remaining odd primes we only use the trivial bound $0\le \rho(p)\le 2$, which holds under the assumption that neither form has a fixed prime
divisor.  For squarefree $d$ composed of odd primes, define $\rho(d)$ multiplicatively. Then, for every interval $I$ of length $T$, we have 
\[
  \#\{t\in I:d\mid F(t)\}
  =
  \frac{\rho(d)}{d}T+ R_d
\]
with $|R_d| \le \rho(d)$. 

Let $N(I)=\#\{t\in I:L_1(t),L_2(t)\text{ are prime}\}$. 
Let $z=\max\{3,T^{1/4}\}$ and let $P(z)$ be the product of the odd primes below $z$. 
Let $S(z)=\#\{t\in I:(F(t),P(z))=1\}$. The values for which one of \(L_1(t)\) or \(L_2(t)\) is a prime not
exceeding \(z\) contribute at most \(A_1(z+1)\) values of \(t\), where
\(A_1>0\) is absolute.  Hence $N(I)\le S(z)+A_1(z+1)$. 

We now apply Selberg's upper bound sieve to the sequence $ \mathcal A=\{F(t):t\in I\}$ in the form of \cite[Theorem~3.1]{HR1974}.  There exist absolute
constants \(A_2,A_3>0\) such that
\[
  S(z)
  \le
  \frac{T}{G(z)}
  +
  A_2 z^2(\log z)^{A_3},
\]
where 
\[
G(z)
  =
  \sum_{\substack{d<z\\ d\mid P(z)}}
  \mu^2(d)
  \prod_{p\mid d}\frac{\rho(p)}{p-\rho(p)}.
\]
Since all terms in \(G(z)\) are non-negative, we may restrict the sum defining \(G(z)\) to squarefree \(d\) with \((d,2\alpha_1\alpha_2\Delta)=1\).  It follows for some absolute constants $A_4, C_1 > 0$ that 
\begin{align*}
G(z) 
&\ge
\sum_{\substack{d<z\\ d\mid P(z) \\ (d,2\alpha_1\alpha_2\Delta)=1}}
\mu^2(d)
\prod_{p\mid d}\frac{2}{p-2}. \\
&\ge
\sum_{\substack{d<z\\ d\mid P(z) \\ (d,2\alpha_1\alpha_2\Delta)=1}}
\mu^2(d)
\prod_{p\mid d}\frac{2}{p}. \\
&\ge
\sum_{\substack{d<z\\ d\mid P(z) \\ (d,2\alpha_1\alpha_2\Delta)=1}}
\frac{\mu^2(d) 2^{\omega(d)}}{d} \\
&\ge
A_4(\log z)^2
\prod_{\substack{p\mid 2 \alpha_1 \alpha_2 \Delta \\ p<z}}
\left(1+\frac{C_1}{p}\right)^{-1}.
\end{align*}
It follows that
\[
  \frac{T}{G(z)}
  \le
  \frac{1}{A_4}\,
  \frac{T}{(\log z)^2}
  \prod_{\substack{p\mid 2\alpha_1\alpha_2\Delta\\ p<z}}
  \left(1+\frac{C_1}{p}\right)
  \le
  \frac{1}{A_4}\,
  \frac{T}{(\log z)^2}
  \prod_{p\mid 2\alpha_1\alpha_2\Delta}
  \left(1+\frac{C_1}{p}\right).
\]
Recall that $z=\max(3,T^{1/4})$. There exists an absolute constant \(A_5>0\) such that
\[
  \frac{T}{(\log z)^2}
  \le
  A_5\left(\frac{T}{\log^2(2+T)}+1\right),
\]
and there exists an absolute constant \(A_6>0\) such that
\[
  A_2 z^2(\log z)^{A_3}+ A_1(z+1)
  \le
  A_6\left(\frac{T}{\log^2(2+T)}+1\right).
\]
Combining the preceding estimates, we obtain absolute constants
\(C,c>0\) such that 
\[
  \#\{t\in I:L_1(t),L_2(t)\text{ are prime}\}
  \le
  C
  \left(
    \frac{T}{\log^2(2+T)}+1
  \right)
  \prod_{p\mid \alpha_1\alpha_2\Delta}
  \left(1+\frac{c}{p}\right).
\]
This proves the lemma.
\end{proof}

We now pass from prime values of two linear forms to correlations of two shifted elements of \(\Ptwo\). The proof decomposes according to whether \(m\) and \(m+h\) are prime or products of two primes. The dependence on the shift \(h\) is collected in the factor \(\W_{\kappa_0}(h)\).
\begin{lemma}\label{lem:twodim}
There exist absolute constants \(C>0\) and \(\kappa_0>0\) such that, for every \(X\ge 3\) and every integer \(h\) with \(0<|h|\le X\), we have
\begin{equation}\label{eq:twodim}
\#\{m\le X:m+h\ge 1,\ m\in\Ptwo,\ m+h\in\Ptwo\}
\le
C
\frac{X(\log\log X)^2}{(\log X)^2}
\W_{\kappa_0}(h),
\end{equation}
where $\displaystyle\W_{\kappa_0}(h)
  =
  \prod_{p\mid h}\left(1+\frac{\kappa_0}{p}\right)$. 
\end{lemma}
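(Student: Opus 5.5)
We split according to the shapes of $m$ and $m+h$. Each lies in $\{1\}$, in $\mathcal P$, or in $\{pq: p\le q\}$. Cases with $m=1$ or $m+h=1$ contribute $O(1)$. If both are prime, Lemma~\ref{lemuniformupperbound} with $L_1(t)=t$, $L_2(t)=t+h$ (so $\Delta=h$, $T=X$) gives $\ll X(\log X)^{-2}\W_c(h)$, which is acceptable. The remaining cases are $(p,\,p'q')$, $(pq,\,p')$ and $(pq,\,p'q')$; we treat the last, the others being simpler versions of it.

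The plan is to fix the smallest prime factor of each of the two composite numbers and count the complementary large primes with Lemma~\ref{lemuniformupperbound}. Write $m=pq$ with $p\le q$, so $p\le\sqrt X$, and $m+h=p'q'$ with $p'\le q'$. The main obstacle is that fixing both $p$ and $p'$ and then counting $q$ leaves only one free variable, with $q$ and $q'$ tied by $pq+h=p'q'$. To handle this, for fixed $p\ne p'$ we solve $pq\equiv -h\pmod{p'}$. This puts $q$ in one residue class modulo $p'$ (when $p'\nmid h$), so $q=r+p't$, and then $q'=(pr+h)/p'+pt$. These are two linear forms in $t$ with coefficients $\alpha_1=p'$, $\alpha_2=p$ and determinant $\Delta=\pm h$ up to a bounded multiple. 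The range of $t$ has length $T\asymp X/(pp')$.

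Lemma~\ref{lemuniformupperbound} then bounds the count by
\[
\ll\Bigl(\frac{X}{pp'\log^2(X/pp')}+1\Bigr)\W_c(pp'h).
\]
The factor coming from $pp'$ is at most $(1+c/p)(1+c/p')\ll 1$, so it can be dropped. The degenerate cases $p=p'$, $p\mid h$, or $p'\mid h$ are handled directly: there the divisibility forces $p\mid h$, $p'\mid h$, or $p=p'\mid h$, and an extra factor $1/p$ can be absorbed into $\W_{\kappa_0}(h)$ after summing.

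Now we sum over $p,p'$. We restrict to $pp'\le X^{1-\epsilon}$, say by additionally requiring $p,p'\le X^{1/3}$. In this range $\log(X/pp')\gg\log X$ and
\[
\sum_{p,p'\le X^{1/3}}\frac{1}{pp'}\ll(\log\log X)^2,
\]
which gives the main bound. The $+1$ term summed over $p,p'\le X^{1/2}$ is $\ll X/\log^2X$.

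The leftover range, where one of $p,p'$ lies in $(X^{1/3},X^{1/2}]$, needs a separate argument. There we use the cofactor symmetry: if $p>X^{1/3}$ then $q<X^{2/3}$, so we fix $q$ instead and count along $p$. More robustly, we fix whichever factor of each number lies in $[X^{\epsilon},X^{1-\epsilon}]$. Then $\log(X/(dd'))\gg\log X$ still holds, or else both remaining variables are large primes in ranges whose reciprocal sums are $O(1)$.

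Finally, when one of the factors is below $X^{\epsilon}$ while its cofactor is near $X$, we simply fix that small factor. The case where both numbers have a factor in $(X^{1-\epsilon},X]$, so that $pp'$ is near $X$, is where the most care is needed. We expect to settle it by choosing $\epsilon$ small and noting that $\sum_{X^{1/2-\epsilon}<p\le X^{1/2}}1/p\ll\epsilon$. This makes that contribution $O\bigl(X(\log X)^{-2}(\log\log X)\W\bigr)$, using the trivial bound $T/\log^2(2+T)+1$. Collecting everything gives \eqref{eq:twodim} with $\kappa_0=c$.
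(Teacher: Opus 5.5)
Your decomposition, the CRT parametrization $q=r+p't$, $q'=(pr+h)/p'+pt$ (whose determinant is exactly $h$), the degenerate cases, and the bound $\pi(\sqrt{2X})^2\ll X/\log^2X$ for the $+1$ terms all match the paper. The proof fails at the one step that needs care: summing the main term
\[
\sum_{p,p'\le\sqrt{2X}}\frac{X/(pp')}{\log^2\bigl(2+X/(pp')\bigr)}
\]
over the range where $pp'$ is close to $X$.

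None of your remedies works there.
\begin{itemize}
\item Switching to the cofactor $q$ cannot help. Any product of a nontrivial factor of $m$ and one of $m+h$ is at least $pp'$. So when $p,p'$ are both near $X^{1/2}$, every choice leaves $\log(X/dd')$ small. For the same reason, fixing whichever factor lies in $[X^{\epsilon},X^{1-\epsilon}]$ does not give $\log(X/dd')\gg\log X$.
\item The range where only one of $p,p'$ exceeds $X^{1/3}$ needs no special treatment, since there $pp'\le X^{5/6}$.
\item Your last case is self-contradictory: if both numbers have a factor above $X^{1-\epsilon}$, then $p,p'<X^{\epsilon}$.
\item Your final estimate is wrong numerically. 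For fixed $\epsilon$, dropping the logarithm, the pairs with $p,p'\in(X^{1/2-\epsilon},X^{1/2}]$ contribute $\asymp X\epsilon^2$, far larger than $X\log\log X/\log^2X$. Letting $\epsilon\to0$ with $X$ to repair this destroys $\log(X/pp')\gg\log X$ on the complementary range.
\end{itemize}

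The missing idea is to keep the factor $\log^{-2}(2+X/pp')$ and use its decay rather than discard it. Split according to $X/(pp')\in[e^{j},e^{j+1})$ for $0\le j\le\frac14\log X$.
\begin{itemize}
\item The prime $p$ is forced into $(\sqrt{X/2}\,e^{-j-1},\sqrt{2X}]$, where Mertens gives $\sum 1/p\ll(1+j)/\log X$.
\item For fixed $p$, the prime $p'$ lies in an interval $(Y/e,Y]$ with $Y\gg X^{1/4}$, where $\sum 1/p'\ll 1/\log X$.
\end{itemize}
So the pairs at level $j$ have $\sum 1/(pp')\ll(1+j)/\log^2X$, while the logarithm contributes $\asymp(1+j)^{-2}$. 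Summing $X/\bigl((1+j)\log^2X\bigr)$ over $j\ll\log X$ gives $X\log\log X/\log^2X$. Larger $j$ falls in your easy range, where the bound is $X(\log\log X)^2/\log^2X$. This is exactly the double integral the paper evaluates for $pq>X^{1/2}$.
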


\begin{proof}
Let \(c>0\) be the absolute constant in
Lemma~\ref{lemuniformupperbound}.  We shall choose an absolute constant
\(\kappa_1\ge c\), enlarged when necessary.  Put 
\[
  \mathcal D_X
  =
  \{p:p\le (2X)^{1/2},\ p\text{ prime}\}
\]
and let 
\[
  \mathcal N(X,h)
  =
  \#\{m\le X:m+h\ge1,\ m\in\Ptwo,\ m+h\in\Ptwo\}.
\]
It is clear that 
\[
  \#\{m\le X:m=1\text{ or }m+h=1\}\le 2. 
\]
For the two cases in which one of \(m,m+h\) is prime, we claim that
\begin{equation}\label{eq:prime-P2-side}
  \#\{m\le X:m\text{ prime},\ m+h\ge1,\ m+h\in\Ptwo\}
  \ll
  \W_{\kappa_1}(h)
  \frac{X\log\log X}{(\log X)^2}.
\end{equation}
By Lemma~\ref{lemuniformupperbound} we know
\[
  \#\{m\le X:m\text{ prime},\ m+h\text{ prime}\}
  \ll
  \W_{\kappa_1}(h)
  \frac{X}{(\log X)^2}. 
\]

If \(m+h\) is composite and belongs to \(\Ptwo\), then $m+h=eq$ with \(e\in\mathcal D_X\) and \(q\) prime. For each fixed \(e\), write \(q=t\). Then $m=et-h$, and the two primality conditions are imposed on the integral linear forms
\[
  L_{1,e}(t)=t,\qquad L_{2,e}(t)=et-h.
\]
Their determinant is $\Delta(L_{1,e},L_{2,e})
  =
  1\cdot(-h)-e\cdot0
  =
  -h$. 
The variable \(t\) ranges over an interval of length \(O(X/e+1)\).
Therefore Lemma~\ref{lemuniformupperbound} gives
\[
  \#\{m\le X:m\text{ prime},\ e\mid m+h,\ (m+h)/e\text{ prime}\}
  \ll
  \W_{\kappa_1}(h)
  \left(1+\frac{\kappa_1}{e}\right)
  \left(
    \frac{X/e}{\log^2(2+X/e)}+1
  \right).
\]
Summing over \(e\in\mathcal D_X\), and using $\log(2+X/e)\gg \log X$ for $e\le (2X)^{1/2}$, 
as well as $\sum_{e\in\mathcal D_X}\frac1e\ll\log\log X$ and $\#\mathcal D_X\ll \frac{X^{1/2}}{\log X}$, we obtain \eqref{eq:prime-P2-side}.

Similarly, we can prove 
\begin{equation}\label{eq:P2-prime-side}
  \#\{m\le X:m\in\Ptwo,\ m+h\text{ prime}\}
  \ll
  \W_{\kappa_1}(h)
  \frac{X\log\log X}{(\log X)^2}.
\end{equation}

We now treat the remaining terms, for which both \(m\) and \(m+h\) are
composite elements of \(\Ptwo\).  In this case we may write $m=du$ and $m+h=ev$, 
where \(d,e\in\mathcal D_X\) and \(u,v\) are prime.  Thus the remaining
contribution is at most $\sum_{d,e\in\mathcal D_X} T_{d,e}(X,h)$, where \(T_{d,e}(X,h)\) counts those \(m\) satisfying
\[
  m\le X,\qquad
  d\mid m,\qquad
  e\mid m+h,\qquad
  \frac md,\ \frac{m+h}{e}\text{ are both prime}.
\]

If \((d,e)\nmid h\), then \(T_{d,e}(X,h)=0\). When \((d,e)\mid h\), we put $g=(d,e)$ and $\ell=[d,e]$, and choose a residue class \(m_0\pmod\ell\) such that $m_0\equiv0\pmod d$ and $m_0\equiv -h\pmod e$. Every counted \(m\) is of the form $m=m_0+\ell t$, 
where \(t\) ranges over an interval of length \(O(X/\ell+1)\).  The two
primality conditions become
\[
  L_1(t)= \frac{\ell}{d} t + \frac{m_0}{d},
  \qquad
  L_2(t)= \frac{\ell}{e} t + \frac{m_0+h}{e}.
\]
These are integral nonconstant linear forms, and their determinant is $\Delta(L_1,L_2)
  =
  \frac{\ell}{d}\frac{m_0+h}{e}
  -
  \frac{\ell}{e}\frac{m_0}{d}
  =
  \frac{h}{g}
  \ne0$. 
Applying Lemma~\ref{lemuniformupperbound}, we get
\[
  T_{d,e}(X,h)
  \ll
  \left(
    \frac{X/\ell}{\log^2(2+X/\ell)}+1
  \right)
  \prod_{r\mid (\ell/d)(\ell/e)(h/g)}
  \left(1+\frac{c}{r}\right).
\]
The prime divisors of \((\ell/d)(\ell/e)\) are among the prime divisors
of \(de\), while the prime divisors of \(h/g\) are among those of \(h\).
Since \(d,e\) are primes, after enlarging \(\kappa_1\) we have
\[
  T_{d,e}(X,h)
  \ll
  \W_{\kappa_1}(h)
  \left(1+\frac{\kappa_1}{d}\right)
  \left(1+\frac{\kappa_1}{e}\right)
  \left(
    \frac{X/\ell}{\log^2(2+X/\ell)}+1
  \right).
\]
The factors involving \(d\) and \(e\) are bounded absolutely, because
\(d,e\ge2\).  Hence
\begin{equation}\label{eq:Tde-new}
  T_{d,e}(X,h)
  \ll
  \W_{\kappa_1}(h)
  \left(
    \frac{X/\ell}{\log^2(2+X/\ell)}+1
  \right).
\end{equation}

It remains to sum \eqref{eq:Tde-new}.  The contribution of the \(+1\)
term is
\[
  \ll
  \W_{\kappa_1}(h)\#\mathcal D_X^2
  \ll
  \W_{\kappa_1}(h)\frac{X}{(\log X)^2}.
\]

For the main term, first consider the diagonal case \(d=e=p\).  Then
\(\ell=p\), and
\[
  \W_{\kappa_1}(h)
  \sum_{p\le (2X)^{1/2}}
  \frac{X/p}{\log^2(2+X/p)}
  \ll
  \W_{\kappa_1}(h)
  \frac{X}{(\log X)^2}
  \sum_{p\le (2X)^{1/2}}\frac1p
  \ll
  \W_{\kappa_1}(h)
  \frac{X\log\log X}{(\log X)^2}.
\]
It remains to consider \(d=p\), \(e=q\), where \(p\ne q\) are primes and
\(p,q\le (2X)^{1/2}\).  Then \(\ell=pq\).  The part with
\(pq\le X^{1/2}\) is bounded by
\[
  \W_{\kappa_1}(h)
  \frac{X}{(\log X)^2}
  \sum_{p,q\le (2X)^{1/2}}\frac1{pq}
  \ll
  \W_{\kappa_1}(h)
  \frac{X(\log\log X)^2}{(\log X)^2}.
\]
For the complementary range \(pq>X^{1/2}\), partial summation together
with Chebyshev's estimate \(\pi(y)\ll y/\log y\) gives
\begin{align*}
  &\W_{\kappa_1}(h) \sum_{\substack{p,q\le (2X)^{1/2}\\ pq>X^{1/2}}}
  \frac{X/(pq)}{\log^2(2+X/(pq))}                                      \\
  &\ll \W_{\kappa_1}(h)
  X
  \int_{\log 2}^{(1/2)\log X+O(1)}
  \int_{\max(\log 2,(1/2)\log X-u+O(1))}^{(1/2)\log X+O(1)}
  \frac{du\,dv}
       {uv\{1+\max(0,\log X-u-v)\}^2}                                  \\
  &\ll \W_{\kappa_1}(h)
  \frac{X\log\log X}{(\log X)^2}.
\end{align*}
The last integral is elementary.  Write $u=\frac12\log X-r$, and $v=\frac12\log X-s$. In the central region the denominator contributes
\((1+r+s)^2\), giving $O\!\left(\frac{\log\log X}{(\log X)^2}\right)$, 
while the boundary regions where \(u\) or \(v\) is small give smaller
contributions.

Combining the estimates for \eqref{eq:prime-P2-side},
\eqref{eq:P2-prime-side}, the diagonal terms, and the off-diagonal terms, we obtain
\[
  \mathcal N(X,h)
  \ll
  \W_{\kappa_1}(h)
  \frac{X(\log\log X)^2}{(\log X)^2}.
\]
Finally, after increasing the absolute constants and putting
\(\kappa_0=\kappa_1\), this proves \eqref{eq:twodim}.
\end{proof}

\section{The average singular factor for \texorpdfstring{$a^a-b^b$}{a to the a minus b to the b}}
In this section we prove the estimate for the average singular factor which is used in the second moment argument.
\begin{proposition}\label{prop:avg-singular}
For every fixed $\kappa>0$ there exists a constant $C_\kappa>0$ such that, for all $K\ge 2$,
\begin{equation}\label{eq:avg-singular}
  \frac{1}{K(K-1)}
  \sum_{\substack{1\le a,b\le K\\a\ne b}}
  \prod_{p\mid a^a-b^b}\left(1+\frac{\kappa}{p}\right)
  \le C_\kappa.
\end{equation}
\end{proposition}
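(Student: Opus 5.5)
We bound the product by a divisor sum and then average over the pairs $(a,b)$. Since $\prod_{p\mid h}(1+\kappa/p)\le \sum_{d\mid h,\ d\ \mathrm{squarefree}} \kappa^{\omega(d)}/d$, it is natural to split the primes dividing $h=a^a-b^b$ into small, medium and large ones. Fix a threshold $y=y(K)$, to be chosen as a small power of $K$ or as $\log K$.

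\emph{Large primes.} The large primes $p>y$ dividing $h$ are harmless. Since $|h|\le K^{K+1}$, the number of such primes is at most $\log|h|/\log y\ll K\log K/\log y$. Hence
\[
\prod_{p\mid h,\ p>y}(1+\kappa/p)\le \exp\Bigl(\kappa\,\frac{K\log K}{y\log y}\Bigr).
\]
This is $O(1)$ provided $y\ge K\log K/\log\log K$, or more simply $y=K^{2}$. With this choice every prime that matters satisfies $p\le K^2$. By Cauchy--Schwarz, or simply by Hölder applied to the factorization into small times medium, it then suffices to bound the average of $\prod_{p\mid h,\,p\le K^2}(1+\kappa/p)$.

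\emph{Medium and small primes.} For these we expand into $\sum_{d} \mu^2(d)\kappa^{\omega(d)}/d$ over $d$ composed of primes $\le K^2$, weighted by the proportion of pairs with $d\mid a^a-b^b$. The key local input is this: for each prime $p$, the map $x\mapsto x^x\bmod p$ depends only on $x\bmod p(p-1)$, and for a fixed target residue the number of $x\bmod p(p-1)$ with $x^x\equiv c\pmod p$ is $O(p)$ up to divisor-type factors. Indeed, writing $x\equiv r\pmod p$ with $r\ne0$ and $x\equiv s\pmod{p-1}$, the condition is $r^s\equiv c$, which for fixed $r$ has at most $\gcd$-many solutions $s$; summing over $r$ gives roughly $(p-1)\cdot(\text{average number of solutions})\approx p$. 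Consequently, for a prime $p$ with $p(p-1)\le K$, the density of pairs with $p\mid a^a-b^b$ is $\ll 1/p$, up to a factor like $\tau(p-1)$ coming from $\gcd(s,p-1)$ fluctuations.

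For squarefree $d$ with $\mathrm{lcm}$ of the moduli $p(p-1)$ at most $K$, we would combine the primes via CRT-type independence, or bound $\#\{(a,b): d\mid a^a-b^b\}$ by fixing $a$ and counting $b$ in residue classes modulo $\mathrm{lcm}_{p\mid d}\,p(p-1)$. This gives a density of $\prod_{p\mid d}O(\tau(p-1)/p)$. Summing over $d$ yields $\prod_{p}(1+O(\kappa\tau(p-1)/p^2))$, which converges, so the contribution of primes with $p^2\lesssim K^{1/2}$ is bounded.

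\emph{Main obstacle.} The hard range is the primes $K^{1/4}<p\le K^2$, where the period $p(p-1)$ exceeds $K$ and equidistribution over full periods is unavailable. There we would instead bound $\sum_{p\in(K^{1/4},K^2]}\kappa/p\cdot\Pr(p\mid a^a-b^b)$, using $\log(1+x)\le x$ to pass to an additive average after applying Jensen/AM--GM to $\exp(\sum \kappa/p\cdot\mathbf 1_{p\mid h})$. For fixed $a$ and $p>K$, the number of $b\le K$ with $b^b\equiv a^a\pmod p$ should be $O(1+K/p)$ times a small factor; that is, the values $b^b\bmod p$ for $b\le K<p$ rarely collide. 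Since residues $r=b$ are distinct and the exponents vary, a collision forces a relation $b^b\equiv a^a$, a polynomial-type condition. Each fixed pair $(a,b)$ is divisible by at most $O(K\log K/\log p)$ such primes, so the expected value of $\sum_{p\mid h,\,p>K^{1/4}}1/p$ is at most $\frac{1}{K^{1/4}}\cdot O(K\log K)\cdot$ (proportion), which needs sharpening.

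The cleaner route for this range is to double count. Over $p\in(K^{1/4},K^2]$, sum $\frac1p\#\{(a,b):p\mid a^a-b^b\}$. Then use the trivial bound $\sum_{p\mid h,\ p>z}1/p\le \omega(h)/z\ll K\log K/(z\log z)$. This handles $p>K^{1+\epsilon}$ but not the window $[K^{1/4},K^{1+\epsilon}]$. For that window I expect the genuine difficulty: one needs, for each $p$, that the pair-collision count is $\ll K^2/p$ on average over $p$ in dyadic ranges. I would first try to prove this by fixing $a\bmod p$ and the exponent class, reducing to counting $b\le K$ with $b\equiv r$ and $b\bmod(p-1)$ in a prescribed set.
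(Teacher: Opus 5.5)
Your proposal does not yet prove the statement. The range you single out as the ``main obstacle'' is left open, and in fact it needs no averaging at all. For any fixed $0<c<C$, Mertens' theorem gives, for every pair $(a,b)$ individually,
\[
  \prod_{\substack{K^{c}<p\le K^{C}\\ p\mid a^a-b^b}}\Bigl(1+\frac{\kappa}{p}\Bigr)
  \le \exp\Bigl(\kappa\sum_{K^{c}<p\le K^{C}}\frac1p\Bigr)
  =\exp\bigl(\kappa\log(C/c)+O(\kappa)\bigr)\ll_{\kappa,c,C}1 .
\]
So the window $(K^{1/4},K^{2}]$ is harmless pointwise (the paper uses $(K^{1/2},K]$). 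You never need collision counts $\ll K^2/p$ for $p>K^{1/2}$, which would indeed be hard.

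Your large-prime threshold is also too pessimistic. Even with your estimate $\log|h|\le (K+1)\log K$, taking $y=K$ gives $\sum_{p>K,\,p\mid h}1/p\le (K+1)/K$; the paper uses $|h|<K^K$ to get fewer than $K$ such primes. With these two pointwise bounds, the medium and large factors simply come out as constants, with no Cauchy--Schwarz or H\"older needed. Only primes $p\le K^{1/2}$ require averaging, and for these the period $p(p-1)$ fits inside $[1,K]$.

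For those small primes your divisor-sum argument is also incomplete. Restricting to squarefree $d$ with $\mathrm{lcm}_{p\mid d}\,p(p-1)\le K$ omits most of the expansion of $\prod_{p\le K^{1/2},\,p\mid h}(1+\kappa/p)$; already $d=pq$ with $p,q\asymp K^{1/2}$ fails the condition. Moreover, ``CRT independence'' is not literally available, because the exponent conditions modulo the various $p-1$ are coupled through their common factors.

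The paper sidesteps joint equidistribution entirely. For $d>1$ it uses $\mathbf{1}_{d\mid h}\le\mathbf{1}_{q\mid h}$ with $q=P^+(d)$, together with the one-prime bound $B_q(K)\ll K^2\tau(q-1)/q$ for $q\le K^{1/2}$ (which your local count of solutions to $r^s\equiv c$ essentially yields). It then sums
\[
  \sum_{\substack{d\ \mathrm{squarefree}\\ P^+(d)=q}}\frac{\kappa^{\omega(d)}}{d}
  =\frac{\kappa}{q}\prod_{p<q}\Bigl(1+\frac{\kappa}{p}\Bigr)\ll_\kappa\frac{(\log q)^{\kappa}}{q},
\]
so the small-prime average is $\ll_\kappa 1+\sum_q\tau(q-1)(\log q)^{\kappa}/q^{2}\ll_\kappa 1$. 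Discarding all but the largest prime factor is affordable precisely because the weight $\kappa/q$ and the collision density $\tau(q-1)/q$ multiply to a convergent series.
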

The proof separates the prime factors of \(a^a-b^b\) into three ranges. The following lemma supplies the estimate needed for the small primes.
\begin{lemma}\label{lem:one-prime-collision}
Let
\[
  B_p(K)=\#\{1\le a,b\le K:a^a\equiv b^b\pmod p\}.
\]
If $p\le K^{1/2}$, then
\begin{equation}\label{eq:Bp}
  B_p(K)\ll K^2\frac{\tau(p-1)}{p},
\end{equation}
where $\tau$ denotes the divisor function.
\end{lemma}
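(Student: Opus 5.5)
The plan is to reduce the count to a collision count for the map $x\mapsto x^x \pmod p$ over one full period, and then to bound that count using a primitive root. The key observation is that $a^a \bmod p$ depends only on $a \bmod p(p-1)$. Indeed, the base matters only modulo $p$. When $p\nmid a$, the exponent matters only modulo $p-1$ by Fermat's little theorem. When $p\mid a$ we have $a^a\equiv 0$, since $a\ge1$. Because $(p,p-1)=1$, the Chinese remainder theorem identifies residues $x \bmod p(p-1)$ with pairs $(u,e)\in \mathbb Z/p\times \mathbb Z/(p-1)$. Under this identification $x^x\equiv u^e$, with the value $0$ when $u=0$.

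Since $p\le K^{1/2}$, we have $p(p-1)\le K$. So each residue class modulo $p(p-1)$ contains at most $K/(p(p-1))+1\le 2K/(p(p-1))$ integers in $[1,K]$. Grouping the pairs $(a,b)$ by their classes modulo $p(p-1)$ gives
\[
  B_p(K)\le \frac{4K^2}{p^2(p-1)^2}\sum_{v\in\mathbb Z/p} N(v)^2,
  \qquad N(v)=\#\{(u,e)\in \mathbb Z/p\times\mathbb Z/(p-1): u^e=v\}.
\]

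It remains to bound $\sum_v N(v)^2$, and this is the only step that requires real work. For $v=0$ we have $N(0)=p-1$. For $v\ne0$, fix a primitive root $g$ and write $u=g^s$ and $v=g^t$. Then $N(v)$ is the number of pairs $(s,e)$ modulo $p-1$ with $se\equiv t \pmod{p-1}$. For fixed $s$ with $d=(s,p-1)$, this congruence has $d$ solutions in $e$ if $d\mid t$, and none otherwise. Hence
\[
  N(v)\le \sum_{d\mid (t,p-1)} d\,\varphi\!\left(\frac{p-1}{d}\right)\le (p-1)\,\tau(p-1).
\]
Combining this with $\sum_v N(v)=p(p-1)$, we get
\[
  \sum_v N(v)^2\le N(0)^2+\max_{v\ne0}N(v)\sum_v N(v)\le (p-1)^2+p(p-1)^2\tau(p-1)\ll p(p-1)^2\tau(p-1).
\]

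Substituting this into the bound for $B_p(K)$ gives
\[
  B_p(K)\ll \frac{K^2}{p^2(p-1)^2}\,p(p-1)^2\tau(p-1)=K^2\frac{\tau(p-1)}{p},
\]
which is \eqref{eq:Bp}. The hypothesis $p\le K^{1/2}$ is used only to absorb the $+1$ in the count of integers per residue class.
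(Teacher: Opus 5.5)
Your proof is correct and follows essentially the same route as the paper. Both arguments reduce modulo the period $p(p-1)$, using $p(p-1)\le K$ to absorb the $+1$. Both then parametrize the nonzero values through a primitive root and bound the collisions by the gcd-sum $\sum_{t \bmod (p-1)}(t,p-1)\le (p-1)\tau(p-1)$. Your bound $\sum_v N(v)^2\le N(0)^2+\max_{v\ne0}N(v)\sum_v N(v)$ on the fibre sizes is just a repackaging of the paper's count of quadruples $(s,e,t,f)$ with $se\equiv tf \pmod{p-1}$, where the paper fixes $s,e,t$ and counts $f$.
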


\begin{proof}
The case \(p=2\) is immediate, so we assume that \(p\) is odd. 
The map \(x\mapsto x^x\pmod p\) has period \(Q=p(p-1)\). 
Indeed, the base depends only on \(x\pmod p\). 
If \(p\nmid x\), the exponent may be reduced modulo \(p-1\), while if \(p\mid x\), then \(x^x\equiv0\pmod p\).

Since \(p\le K^{1/2}\), we have \(Q\le K\). 
Each residue class modulo \(Q\) occurs in \([1,K]\) at most $\lceil K/Q\rceil$ times. 
It follows that $B_p(K) \ll \left(\frac{K}{Q}\right)^2 M_p$, where
\[
  M_p=\#\{u,v \pmod Q: u^u\equiv v^v\pmod p\}.
\]
Here $u^u$ and $v^v$ are evaluated using any positive representatives of the residue classes modulo $Q$.

We now estimate $M_p$.  If $p\mid u$, then $u^u\equiv0\pmod p$.  There
are $p-1$ residue classes $u\pmod Q$ with $p\mid u$, and hence the
zero residue modulo $p$ contribution to $M_p$ is $(p-1)^2$. 

It remains to consider residue classes $u, v \pmod{Q}$ with $p\nmid u$ and $p\nmid v$. Choose a primitive root $g$ modulo $p$.  By the Chinese Remainder Theorem, a residue class $u\pmod {p(p-1)}$ with $p\nmid u$ is equivalent to a pair $s,e \pmod {p-1}$ with $u\equiv g^s \pmod p$ and $u\equiv e \pmod {p-1}$. 
For such a class we have $u^u\equiv (g^s)^e=g^{se}\pmod p$. 
Let
\[
  U_p
  =
  \#\{s,e,t,f \pmod {p-1}: se\equiv tf\pmod {p-1}\}.
\]
After $s,e,t$ are fixed, the congruence $tf\equiv se\pmod {p-1}$ has at most $(t,p-1)$ solutions in $f$.  Therefore
\[
  U_p
  \le
  (p-1)^2\sum_{t\pmod {p-1}}(t,p-1).
\]
Moreover, we have 
\begin{align*}
&\sum_{t\pmod {p-1}}(t,p-1) \\
&=
  \sum_{d \mid p-1} d\,
  \#\{t\pmod {p-1}:(t,p-1)=d\} \\
  &=
  \sum_{d\mid p-1} d\,\varphi\!\left(\frac{p-1}{d}\right) \\
  &=
  (p-1)\sum_{r\mid p-1}\frac{\varphi(r)}{r} \\
  &\le
  (p-1)\tau(p-1).
\end{align*}
Consequently, we have $U_p\le (p-1)^3\tau(p-1)$ and $M_p \ll
  p^3\tau(p-1)$. 
Substituting this into the preceding bound, and recalling that
$Q=p(p-1)$, we obtain $B_p(K) \ll K^2\frac{\tau(p-1)}{p}$. This proves the lemma.
\end{proof}

We now prove Proposition~\ref{prop:avg-singular}.
\begin{proof}
For $a\ne b$, set $h=a^a-b^b\ne 0$. When $1\le a,b\le K$, we have $|h|\le K^K$. 

Split the prime factors into three ranges $p\le K^{1/2}$, $K^{1/2}<p\le K$, and $p>K$. For the middle range, Mertens' estimate gives
\[
  \prod_{\substack{K^{1/2}<p\le K\\p\mid h}}
  \left(1+\frac{\kappa}{p}\right)
  \le
  \prod_{K^{1/2}<p\le K}
  \left(1+\frac{\kappa}{p}\right)
  \ll_\kappa 1.
\]
For the large primes, if $r$ distinct primes $p>K$ divide $h$, then $r < K$ since $K^r < K^K$. Hence
\[
  \sum_{\substack{p>K\\p\mid h}}\frac{1}{p}\le \frac{r}{K}\le 1,
\]
and therefore
\[
  \prod_{\substack{p>K\\p\mid h}}
  \left(1+\frac{\kappa}{p}\right)
  \le
  \exp\left(\kappa\sum_{\substack{p>K\\p\mid h}}\frac{1}{p}\right)
  \le e^\kappa.
\]
It remains only to prove that the small primes part has bounded average. Define
\[
  Z(a,b)=
  \prod_{\substack{p\le K^{1/2}\\p\mid a^a-b^b}}
  \left(1+\frac{\kappa}{p}\right).
\]
Then we have 
\[
  Z(a,b)=
  \sum_{\substack{d\ge 1\ \mathrm{squarefree}\\p\mid d\Rightarrow p\le K^{1/2}}}
  \frac{\kappa^{\omega(d)}}{d}\,\one_{d\mid a^a-b^b}.
\]
For the average over the off-diagonal pairs $(a,b)$ and any function $F(a, b)$, write
\[
  \mathbb E_K (F(a,b))
  =\frac{1}{K(K-1)}
  \sum_{\substack{1\le a,b\le K\\a\ne b}}F(a,b).
\]
The trivial divisor $d=1$ contributes $1$ in $\mathbb E_K (Z(a,b))$. For $d>1$, let $q=P^+(d)$ be the largest prime factor of $d$. Then $\one_{d\mid a^a-b^b} \le \one_{q\mid a^a-b^b}$. 
Although $B_q(K) =\#\{1\le a,b\le K:a^a\equiv b^b\pmod q\}$ includes the diagonal pairs $a=b$, using its upper bound estimate only enlarges the off-diagonal count.  By Lemma~\ref{lem:one-prime-collision}, for any prime $q\le K^{1/2}$, we have 
\[
  \mathbb E_K (\one_{q\mid a^a-b^b})
  \le \frac{B_q(K)}{K(K-1)}
  \ll \frac{\tau(q-1)}{q}.
\]
Consequently
\begin{align*}
  \mathbb E_K (Z(a, b))
  &\le 1+
  \sum_{\substack{q\le K^{1/2}\\q\ \mathrm{prime}}}
  \frac{\tau(q-1)}{q}
  \sum_{\substack{d\ \mathrm{squarefree}\\P^+(d)=q}}
  \frac{\kappa^{\omega(d)}}{d} \\
  &=1+
  \sum_{\substack{q\le K^{1/2}\\q\ \mathrm{prime}}}
  \frac{\tau(q-1)}{q}\cdot
  \frac{\kappa}{q}
  \prod_{p<q}\left(1+\frac{\kappa}{p}\right).
\end{align*}
By Mertens' estimate $\displaystyle \prod_{p<q}\left(1+\frac{\kappa}{p}\right) \ll_\kappa (\log q)^\kappa$, the divisor function bound $\tau(n)\ll_\eps n^\eps$ with any small $\eps > 0$, we obtain
\[
\mathbb E_K (Z(a, b))
\ll_\kappa 
1 + 
\sum_{q \text{ prime}}
\frac{\kappa \tau(q-1)(\log q)^\kappa}{q^2}
\ll_\kappa
\sum_{q \text{ prime}}
\frac{(\log q)^\kappa}{q^{2 - \eps}}
\ll_\kappa 
1.
\]
Together with the pointwise bounds for the middle and large prime ranges, this gives
\[
  \mathbb E_K \left(
  \prod_{p\mid a^a-b^b}\left(1+\frac{\kappa}{p}\right)
  \right)
  \ll_\kappa 1.
\]
This proves \eqref{eq:avg-singular}.
\end{proof}

\section{Proof of the main theorem}
\begin{proof}
Let $K=K(N)=\max\{a\in\N:a^a\le N/2\}$. Then $K\sim \frac{\log N}{\log\log N}$. Define the representation function
\begin{equation}\label{eq:RNdef}
  R_N(n)=\sum_{1\le a\le K}\one_{\Ptwo}(n-a^a),
\end{equation}
where the summand is interpreted as \(0\) when \(n-a^a<1\). We will prove
$\sum_{n\le N}R_N(n)\gg N$ and $\sum_{n\le N}R_N(n)^2\ll N$. 

For the first moment, using \eqref{eq:RNdef} and the inequality \(a^a\le N/2\), we get
\[
  \sum_{n\le N}R_N(n)
  =\sum_{1\le a\le K}\#\{m\le N-a^a:m\in\Ptwo\}.
\]
Since $N/2\le N-a^a\le N$ for $a\le K$, Landau's asymptotic
\eqref{eq:P2-asymp} is uniform in this range and gives
\[
  \#\{m\le N-a^a:m\in\Ptwo\}
  \sim (N-a^a)\frac{\log\log N}{\log N}.
\]
Moreover, we have $\displaystyle \sum_{1\le a\le K}a^a=O(N)$, 
because the last term is at most $N/2$ and the preceding terms form a rapidly decreasing tail. Hence
\begin{equation}\label{eq:first-moment}
  \sum_{n\le N}R_N(n)
  \sim
  \left(KN-\sum_{1\le a\le K}a^a\right)\frac{\log\log N}{\log N}
  \sim N.
\end{equation}

For the second moment, expanding the square gives
\begin{equation}\label{eq:second-expand}
  \sum_{n\le N}R_N(n)^2
  =
  \sum_{1\le a,b\le K}
  \#\{n\le N:n-a^a\in\Ptwo,\ n-b^b\in\Ptwo\}.
\end{equation}
The diagonal contribution \(a=b\) is bounded, by Landau's estimate, as
\begin{equation}\label{eq:diagonal}
  \sum_{1\le a\le K}\#\{n\le N:n-a^a\in\Ptwo\}
  \ll
  K\frac{N\log\log N}{\log N}
  \ll N.
\end{equation}
It remains to estimate the off-diagonal terms. Put \(h=a^a-b^b\ne0\). With \(m=n-a^a\), the condition \(n-b^b\in\Ptwo\) becomes \(m+h\in\Ptwo\). Since \(a^a,b^b\le N/2\), we have \(0<|h|\le N/2\). Lemma~\ref{lem:twodim} gives
\[
  \#\{m\le N:m+h\ge 1,\ m,m+h\in\Ptwo\}
  \ll
  \frac{N(\log\log N)^2}{(\log N)^2}\W_{\kappa_0}(h).
\]
Using Proposition~\ref{prop:avg-singular}, we obtain
\begin{align}
  &\sum_{\substack{1\le a,b\le K\\a\ne b}}
  \#\{n\le N:n-a^a\in\Ptwo,\ n-b^b\in\Ptwo\} \nonumber \\
  &\ll
  \frac{N(\log\log N)^2}{(\log N)^2}
  \sum_{\substack{1\le a,b\le K\\a\ne b}}
  \W_{\kappa_0}(a^a-b^b) \nonumber \\
  &\ll
  \frac{N(\log\log N)^2}{(\log N)^2}K^2 \nonumber \\
  &\ll N. \label{eq:offdiag3}
\end{align}
Combining \eqref{eq:second-expand}, \eqref{eq:diagonal}, and \eqref{eq:offdiag3}, we have
\begin{equation}\label{eq:second-moment}
  \sum_{n\le N}R_N(n)^2\ll N.
\end{equation}
Let $E_N=\{n\le N:R_N(n)>0\}$. 
By Cauchy's inequality, we have 
\[
  \left(\sum_{n\le N}R_N(n)\right)^2
  \le \#E_N\sum_{n\le N}R_N(n)^2.
\]
Together with \eqref{eq:first-moment} and \eqref{eq:second-moment}, this yields \(\#E_N\gg N\). Therefore there exists \(\delta>0\) such that, for all sufficiently large \(N\),
\[
  \#\{n\le N:n=m+a^a,\ m\in\Ptwo,\ a\ge1\}\ge \delta N.
\]
This proves Theorem~\ref{thm:main}. 
\end{proof}

\section*{Acknowledgments}
Huixi Li's research is supported by the National Natural Science Foundation of China (Grant No. 12561001). 
The authors thank Liangxun Li for helpful discussion.
ChatGPT was used as an auxiliary tool. All mathematical arguments, proofs, and computations were independently verified by the authors.

\end{document}